\numberwithin{equation}{section}
\theoremstyle{plain}
\newtheorem{theorem}{Theorem}
\newtheorem{proposition}{Proposition}[section]
\newtheorem{lemma}{Lemma}[section]
\newtheorem{example}{Example}[section]
\newtheorem{definition}{Definition}[section]
\newtheorem*{Qu}{Question}
\theoremstyle{definition}
\def\Hom{{\hbox{\bf Hom}\;}}
\def\Obj{{\hbox{\bf Obj}}}
\def\C{{\mathbb C}}
\def\G {\Gamma}
\def\A {{\mathcal A}}
\def\* {\ast}
\numberwithin{equation}{section}
\author{А.\,А.~Аrutyunov}
\title{A combinatorial view on derivations in bimodules}
\begin{document}
\maketitle

\begin{abstract}
 This paper is devoted to derivations in bimodules over group rings using previously proposed methods which are related to character spaces over groupoids. The theorem describing the arising spaces of derivations is proved. We consider some examples, in particular the case of $(\sigma, \tau)$-derivations.
\end{abstract}

\section{Introduction}

Derivations in various associative algebras have been actively studied since the middle of the last century. In particular, the following question, known as the Johnson Problem (or ''Derivation Problem''), is widely known:
\begin{Qu}[\cite{Dales}, question 5.6.B]
 Is it true that all derivations in $L_1(G)$ are inner?
\end{Qu}
Here and hereafter $G$ is a finitely generated generally noncommutative group.

A partial answer to this question was given by B. Johnson himself in \cite{Johnson2001}, and the most complete answer was found by V. Losert's in \cite{Losert}. A more detailed description of the history of this problem is given in \cite{AM,Dales}.

In purely algebraic form, consider the group ring $\C[G]$, that is, the space of all linear combinations of the form $\sum_{g\in G} x(g) g$, where $x(\cdot)$ is a finite function --  a function with a finite support. The derivation in this case is a linear operator $d:\C[G]\to \C[G]$ satisfying the Leibniz rule
$$
    d(uv) = d(u)v + u d(v), \quad \forall u,v\in\C[G].
$$
The Johnson's problem is whether there are  derivation other than inner, i.e., having the form
$$
    d_a: x\to [a,x], a\in \C[G].
$$
In this formulation of the problem, the algebra of outer derivations will be nontrivial (see central derivations from \cite{Ar} as well as \cite{AM}).

In the present paper we focus on the study of Banach spaces equipped with a bimodule structure over a group ring $\C[G]$ of the following form. Let us fix a norm $\||\cdot\|$ in $\C[G]$ and let $\A$ be the closure of the group ring by this norm. $\A$ is not an algebra, but it is naturally understood as a bimodule over the ring $\C[G]$.

The ring $\C[G]$ will be defined as space with the supremum norm $\|\cdot\||_s$, i.e. for $\omega = \sum\limits_{g\in G}x(g) g$ give
$$
\|\omega\|_s := \sup\limits_{g\in G} |x(g)|.
$$

The boundedness of operators will be defined as follows. Let us define the norm of the operator $d$ as
\begin{equation}
 \|d\| = \sup\limits_{\omega\neq 0\in\C[G]} \frac{\|d(\omega)\|}{\|\omega\|_s}.
\end{equation}

\begin{definition}
By the derivation over $\C[G]$ with values in bimodule $\A$ we will call a linear bounded operator $d:\C[G]\to\A$ such that
\begin{equation}
 d(uv) = d(u)v + u d(v), \quad \forall u,v\in \C[G].
\end{equation}
The space of such operators we will denote as $Der(\A)$.
\end{definition}

For a wide class of norms (including natural $\ell_p$) the following statement will be proved.
\begin{theorem}
\label{th-norma-qinn}
 If the norm $\|\cdot\|$ is subordinate to the supremum norm, then all derivations in the bimodule $\A$ are quasi-inner.
\end{theorem}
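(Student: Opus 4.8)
The plan is to translate the Leibniz rule into a groupoid–character (cocycle) condition and then show that boundedness forces the resulting cohomological obstructions to vanish. First I would record that a derivation is determined by its values $d(g)$, $g\in G$, and pass to the twisted family $\chi(g):=d(g)g^{-1}\in\A$. The Leibniz rule becomes the $1$-cocycle identity $\chi(gh)=\chi(g)+g\,\chi(h)\,g^{-1}$ for the conjugation action of $G$ on $\A$, while an inner derivation $d_a(x)=ax-xa$ corresponds to the coboundary $\chi(g)=a-gag^{-1}$. Writing $\chi(g)=\sum_{t}\psi(g,t)\,t$, the task reduces to solving the single system $\psi(g,t)=a(t)-a(g^{-1}tg)$ for one function $a:G\to\C$; producing such an $a$ (with controlled coefficients, implementing $d$ in the bimodule) is exactly what it means for $d$ to be quasi-inner. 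So everything comes down to solving this coboundary equation for the character $\psi$.

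Next I would treat $\psi$ as a character on the groupoid of the conjugation action, whose orbits are the conjugacy classes of $G$ and whose isotropy group at a point $t_0$ is the centralizer $Z(t_0)$. The coboundary equation splits over conjugacy classes, since $t$ and $g^{-1}tg$ always lie in the same class. Fixing a representative $t_0$ in each class and, for $t$ in that class, an element $h_t$ with $h_t^{-1}t_0h_t=t$, I would set $a(t_0)=0$ and $a(t):=-\psi(h_t,t_0)$, i.e. integrate the character along a groupoid path from $t_0$ to $t$. Using the cocycle identity in the form $\psi(g_1g_2,t)=\psi(g_1,t)+\psi(g_2,g_1^{-1}tg_1)$, a short computation then shows that once $a$ is well defined it satisfies $\psi(g,t)=a(t)-a(g^{-1}tg)$ for \emph{all} $g,t$ (with $g^{-1}tg=(h_tg)^{-1}t_0(h_tg)$, the two defining values telescope), so that $d=d_a$.

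The step I expect to be the main obstacle is the well-definedness of $a$, and it is precisely here that the hypotheses are used. If $h,h'$ both conjugate $t_0$ to $t$, then $w:=h'h^{-1}\in Z(t_0)$ and the cocycle identity gives $\psi(h',t_0)=\psi(h,t_0)+\eta(w)$ with $\eta(w):=\psi(w,t_0)$; restricting the cocycle identity to $g_1,g_2\in Z(t_0)$ shows that $\eta:Z(t_0)\to\C$ is a group homomorphism (the isotropy character), and $a$ is well defined on the class exactly when $\eta\equiv0$. Now $\|g\|_s=1$ gives $\|d(g)\|\le\|d\|$, and since right multiplication by a group element is isometric for the norms in question (as for $\ell_p$ and the supremum norm) we get $\|\chi(g)\|=\|d(g)\|\le\|d\|$; the subordination hypothesis furnishes a constant $C>0$ with $\|\omega\|_s\le C\|\omega\|$ for all $\omega\in\C[G]$, whence the single coefficient obeys $|\eta(g)|=|\psi(g,t_0)|\le\|\chi(g)\|_s\le C\|d\|$ uniformly in $g$. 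A bounded homomorphism into $(\C,+)$ must vanish, because $\eta(g^n)=n\,\eta(g)$ stays bounded only if $\eta(g)=0$; hence $\eta\equiv0$ on every centralizer. Thus $a$ is well defined on each conjugacy class, its coefficients satisfy $|a(t)|\le C\|d\|$, and gluing the classwise constructions yields one bounded function $a$ with $d=d_a$, i.e. $d$ is quasi-inner. The genuinely delicate ingredient is exactly the passage from the operator bound $\|d\|<\infty$ to the uniform coefficient estimate on $\psi$ — the content encapsulated by ``subordinate to the supremum norm'' — since it is this estimate that simultaneously makes the isotropy characters bounded and keeps the coefficients of the constructed $a$ under control; the combinatorial integration and the vanishing argument are then formal.
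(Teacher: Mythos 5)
Your proof is correct, and its engine is exactly the paper's: a nonzero character value on a loop grows linearly along powers ($\eta(g^n)=n\,\eta(g)$ in your notation, $\chi(\phi^n)=n\chi(\phi)$ in the paper's Lemma~\ref{th-suprem}), which is incompatible with the uniform coefficient bound $C\|d\|$ coming from boundedness of $d$ plus subordination. The packaging, however, differs. The paper quotes the character formula \eqref{eq-der-formula} from earlier work, proves loop-vanishing in the supremum-norm bimodule $\A_s$ (Lemma~\ref{th-suprem}), and then obtains the theorem by a two-line reduction; since quasi-inner is \emph{defined} (Definition~\ref{def-QInn}) as loop-vanishing, it stops there. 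You re-derive the cocycle identity from scratch and go further: you integrate the character over each conjugacy class to build a potential $a$ with $d=d_a$ formally, well-definedness being precisely the vanishing of the isotropy characters (which, under the translation $\eta(w)=\psi(w,t_0)$ for $w\in Z(t_0)$, are exactly the paper's loop values). This buys a self-contained argument, makes explicit the equivalence between the loop-vanishing definition and the paper's informal ``formal commutator'' description of quasi-inner derivations, and yields the sharper conclusion that $a$ can be taken with $|a(t)|\le C\|d\|$, i.e.\ $a\in\A_s$. Two repairs are needed, both minor. (i) Your appeal to right multiplication being isometric for $\|\cdot\|$ is not granted by the hypotheses --- only subordination is assumed, and e.g.\ weighted $\ell_1$-norms are subordinate to the supremum norm without being right-invariant. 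This is inessential: apply subordination to $d(g)$ first, getting $\|d(g)\|_s\le C\|d\|$, and then use that the \emph{supremum} norm is right-invariant (right translation merely permutes coefficients), so $\|\chi(g)\|_s=\|d(g)g^{-1}\|_s=\|d(g)\|_s\le C\|d\|$. (ii) You read ``subordinate'' as a uniform bound $\|\omega\|_s\le C\|\omega\|$, which is stronger than the literal finiteness implication of Definition~\ref{def-norma-podch}; the paper's own proof does the same (``$\|d(g)\|_s<CA$''), so this is evidently the intended meaning, but it is worth flagging.
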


Let us explain about which norms we are talking about.
\begin{definition}
\label{def-norma-podch}
 We will say that the norm $\|\cdot\|$ is subordinated to the norm $\|\cdot\|_s$ iff
 \begin{equation*}
 \|\omega\|<\infty \implies \|\omega\|_s<\infty
 \end{equation*}
\end{definition}

A rigorous definition of quasi-inner derivations is given below (see definition \ref{def-QInn}). Informally speaking, quasi-inner derivations are such operators that can be represented as a formal infinite sum of inner derivations, i.e. the commutator $x\to [a,x]$, for the element $a$ not lying in $\A$ in general.

\section{Preliminaries}

The following definitions and results are based on \cite{AM,Ar-Al}.

Let $\G$ be a groupoid of connected action in which objects coincide with elements of the group $\Obj(\G) = G$, and morphisms are pairs of elements of the group, i.e. $\Hom(\G) = G\times G$. Moreover, the morphism $\phi:=(u,v)$ has a beginning $s(\phi)=v^{-1}u$ and an end $t(\phi) = u v^{-1}$. We will define endomorphisms (i.e. morphisms which source and target coincide) $\phi$ as loops for clarity.

 Then we define a character as a complex-valued function $\chi:\Hom(\G)\to \C$ such that
\begin{equation}
\label{eq-character}
\chi(\psi\circ\phi)= \chi(\psi)+\chi(\phi),
\end{equation}
 for all pairs of linkable morphisms $\phi,\psi$.

The groupoid $\G$ will be represented as an uncoupled union of subgroupoids $\G_{[u]}$, where $[u]$ is a class of conjugate elements.
$$
    [u] = \{tut^{-1}| t\in G\}.
$$
Objects of the subgroupoid $\G_{u}$ coincide with the class $u$.

%The construction above gives is us that each derivation can be presented by character.

\begin{lemma}
\label{lemma-formula}
	For every derivation $d\in Der(\A_s)$ there exists a character $\chi\in X(\G)$ such that the following formula holds
	\begin{equation}
\label{eq-der-formula}
   d(g) = g \left(\sum\limits_{t\in G} \chi(gt,g)t\right), \forall g\in G.
\end{equation}
\end{lemma}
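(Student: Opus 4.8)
The plan is to read the character off the coefficients of $d$ and then observe that the additivity condition \eqref{eq-character} is nothing but a restatement of the Leibniz rule. Since $d$ is linear it is determined by its values $d(g)$ for $g\in G$, and since $d(g)\in\A_s$ lies in the supremum-norm completion of $\C[G]$ it is a genuine function on $G$ with well-defined coefficients; write $d(g)=\sum_{h\in G}[d(g)]_h\,h$. I would then simply \emph{define}
$$
\chi(h,g):=[d(g)]_h,\qquad (h,g)\in\Hom(\G)=G\times G .
$$
Re-indexing the sum by $h=gt$ gives $d(g)=g\sum_{t\in G}\chi(gt,g)\,t$, so the asserted formula \eqref{eq-der-formula} holds by construction. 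The whole content of the lemma is thereby reduced to checking that this $\chi$ is a character.

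To do so I would first convert the Leibniz identity into a functional equation for $\chi$. Expanding $d(gk)=d(g)k+g\,d(k)$ and comparing the coefficient of a fixed $m\in G$ on both sides — the term $d(g)k$ shifts the index on the right by $k$, while $g\,d(k)$ shifts it on the left by $g$ — yields
$$
\chi(m,gk)=\chi(mk^{-1},g)+\chi(g^{-1}m,k),\qquad\forall\,g,k,m\in G .
$$
This is routine bookkeeping once the coefficients of a right, respectively left, translate are written out.

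The substantive step is to recognize this equation as the cocycle condition \eqref{eq-character}. Putting $m=gkt$ rewrites the three arguments as $\chi(gkt,gk)$, $\chi(g(ktk^{-1}),g)$ and $\chi(kt,k)$. Computing sources and targets from the given maps $s(u,v)=v^{-1}u$, $t(u,v)=uv^{-1}$, the morphism $\phi=(kt,k)$ runs $t\to ktk^{-1}$ and $\psi=(g(ktk^{-1}),g)$ runs $ktk^{-1}\to g(ktk^{-1})g^{-1}$, so they are composable, and the composite $\psi\circ\phi$ runs $t\to g(ktk^{-1})g^{-1}$, which is exactly the morphism $(gkt,gk)$. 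Hence the functional equation becomes precisely $\chi(\psi\circ\phi)=\chi(\psi)+\chi(\phi)$.

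I expect the main obstacle to lie in this final identification, since the excerpt specifies only the source and target maps of $\G$ and not its composition law explicitly. One must therefore recover the composition from $s$ and $t$ — a short computation gives $(u_2,v_2)\circ(u_1,v_1)=(v_2u_1,v_2v_1)$ whenever $t(u_1,v_1)=s(u_2,v_2)$, with identities $(x,1)$ — and verify that $(gkt,gk)$ is indeed $\psi\circ\phi$. One must also check that the parametrization is exhaustive: every morphism $(u,v)$ equals $(kt,k)$ with $k=v$, $t=v^{-1}u$, and composability forces $s(\psi)=t(\phi)=ktk^{-1}$, i.e. $\psi=(g(ktk^{-1}),g)$, so that as $g,k,t$ range over $G$ every composable pair is obtained and the cocycle condition is confirmed on all of $\G$. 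I would finally note that $(gt,g)$ joins $t$ to $gtg^{-1}$, so $\chi$ is supported within single conjugacy classes, in agreement with the decomposition $\G=\bigsqcup_{[u]}\G_{[u]}$.
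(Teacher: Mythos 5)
Your proposal is correct and takes essentially the same route as the paper: the paper's proof is just a citation to Theorem~1 of \cite{Ar} together with the remark that character values are the coefficients of $d(g)$ in the standard basis and that the Leibniz rule yields property \eqref{eq-character}, which is exactly your argument carried out in detail (with the correct conjugation-action composition law $(u_2,v_2)\circ(u_1,v_1)=(v_2u_1,v_2v_1)$ and the correct functional equation $\chi(m,gk)=\chi(mk^{-1},g)+\chi(g^{-1}m,k)$). Your observation that no local finiteness is needed — coefficients of $d(g)\in\A_s$ need only be well defined, not finitely supported — is precisely the one adjustment the paper itself notes when adapting the proof from the group-ring case.
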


\begin{proof}
	The proof Literally repeats the proof of Theorem 1 of \cite{Ar} except for the property of local finiteness, which was a consequence of the finiteness of nonzero terms in elements of group algebras.
\end{proof}

Let us make it clear that character values are coefficients of the expansion of a linear operator by standard basis of the ring $\C[G]$. Due to the implementation of the Leibniz's rule, the calculation shows that the property \eqref{eq-character} is fulfilled.

The space of quasi-inner derivations was introduced earlier in \cite{Ar-Al}. It will play an important role below.
\begin{definition}
\label{def-QInn}
	We will define the derivation $d\in Der(\A(G))$ as quasi-inner if the character $\chi$ in the formula \eqref{eq-der-formula} is zero on all loops.
\end{definition}

It is easy to see that the inner derivations are quasi-inner. Indeed, let $a\in G$ be a basis element, then the character $\chi^a$ corresponding to the inner derivation $d_a: x\to [x,a]$ has the form (see Section 2.2 in \cite{Ar} for details)

$$
\chi^a (\phi) =
\begin{cases}
 1, & \phi\in \Hom(a,b), b\neq a,
 \\-1 & \phi\in \Hom(b,a), b\neq a,
 \\ 0 & \textit{else}.
\end{cases}
$$

The character $\chi^a$ is zero on all loops. The inner derivation is a sum of (possibly infinite) characters $\chi^a$, and hence is itself trivial on loops.

The role of quasi-inner derivations for group rings is that they form an ideal containing the ideal of inner derivations (Theorem 4.1, \cite{Ar-Al}), with examples of quasi-inner derivations that would not be inner in the group ring (Section 3.3 of \cite{Ar} -- the case of Heisenberg group).

 The question of coincidence spaces of  inner and quasi-inner derivations in a bimodule $\A$ is equivalent to the fact that inner derivations are dense in the space of quasi-inner derivations.

\section{Derivations in bimodules}

Recall that in the group ring $\C[G]$ we have a ''supremum norm''. For $x = \sum\limits_{g\in G} x(g) g\in\C[G]$, where $x(\cdot)$ is a finite function
\begin{equation}
	\|x\|_s:= \sup\limits_{g\in G} |x(g)|.
\end{equation}

The Banach bimodule which is the closure of $\C[G]$ by the supremum norm is denoted by $\A_s(G)$. The bimodule $\A_s(G)$ can be understood as the space of elements of the form $\sum\limits_{g\in G} x(g) g$, where the function $x(g)$ is bounded.

\begin{lemma}
\label{th-suprem}
	All derivations with values in the Banach bimodule $\A_s$ are quasi-inner.
\end{lemma}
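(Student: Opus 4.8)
The plan is to feed the explicit formula of Lemma~\ref{lemma-formula} into the boundedness hypothesis on $d$ and thereby force the associated character $\chi$ to vanish on every loop, which is precisely the defining property of quasi-inner derivations in Definition~\ref{def-QInn}.

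First I would expand \eqref{eq-der-formula} in the standard basis of $\A_s$. Carrying out the left action of $g$ term by term and reindexing by $s=gt$ gives
$$
d(g)=\sum_{t\in G}\chi(gt,g)\,(gt)=\sum_{s\in G}\chi(s,g)\,s ,
$$
so the coefficient of the basis element $s$ in $d(g)$ is the single character value $\chi(s,g)$. Since the norm on $\A_s$ is the supremum norm, this means $\|d(g)\|=\sup_{s\in G}|\chi(s,g)|$, while $\|g\|_s=1$ for each basis element $g$.

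The step I expect to carry the real content is the passage from the operator bound to a pointwise bound on $\chi$. Boundedness of $d$ gives $\|d(g)\|\le\|d\|\,\|g\|_s=\|d\|$, hence $|\chi(s,g)|\le\|d\|$ for all $s,g\in G$. As the pairs $(s,g)$ run through all of $\Hom(\G)=G\times G$, this says that $\chi$ is uniformly bounded by $\|d\|$ on the entire morphism set. It is essential here that a single constant $\|d\|$ controls all the values $\chi(s,g)$ at once: the weaker fact that $d(g)\in\A_s$ for each fixed $g$ would only bound $\chi(\cdot,g)$ separately, which is not enough to control the iterated morphisms appearing next.

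Finally I would use additivity \eqref{eq-character} along loops. Fix a loop $\phi$ at an object $a\in\Obj(\G)$; its powers $\phi^{\,n}$ lie in the isotropy group of $a$ and are again loops, and iterating \eqref{eq-character} yields $\chi(\phi^{\,n})=n\,\chi(\phi)$ for every $n\in\N$. The uniform bound then forces $n\,|\chi(\phi)|=|\chi(\phi^{\,n})|\le\|d\|$ for all $n$, so $\chi(\phi)=0$. Thus $\chi$ is zero on every loop, and $d$ is quasi-inner by Definition~\ref{def-QInn}. I also note that this lemma is precisely the case $\|\cdot\|=\|\cdot\|_s$ of Theorem~\ref{th-norma-qinn}, since the supremum norm is trivially subordinate to itself (Definition~\ref{def-norma-podch}); I would expect the general theorem to follow by reducing a subordinate norm to the supremum norm.
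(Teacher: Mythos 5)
Your proof is correct and is essentially the paper's own argument run in the direct rather than the contrapositive direction: the paper assumes $\chi(\phi)\neq 0$ on a loop $\phi=(gt,t)$ and exhibits the unbounded sequence $\|d(t^n)\|_s\geq \chi(\phi^n)=n$ against $\|t^n\|_s=1$, which is exactly your inequality $n|\chi(\phi)|=|\chi(\phi^n)|\leq\|d\|$ read backwards. A small bonus of your phrasing is that the uniform bound $|\chi(s,g)|\leq\|d\|$ over all of $\Hom(\G)$ makes the paper's side remark that $t$ must have infinite order unnecessary, since the argument works whether or not the loop has torsion.
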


\begin{proof}
	Let $\chi$ be the character corresponding in the sense of the lemma \ref{lemma-formula} to a nontrivial derivation $d$. Let us show that if the character $\chi$ takes a nonzero value on some loop, then the operator $d$ is unbounded.

	Consider the loop $\phi\in Hom(g,g)$. As can be seen from the definition, for some $t\in Z(g)$, the notation $\phi = (gt,t)$ is valid.

	Let $\chi (\phi)\neq 0$. Without loss of generality, we may assume that $\chi (\phi) = 1$.

	Note that the element $t$ cannot be of finite order in the group $G$, otherwise we would get that $\chi ((g,e))=ord(t)$, which is impossible since $(g,e)$ is a neutral morphism.

	Given that $t$ is of infinite order, using the property \eqref{eq-character}, we have
	$$
		\chi((gt^n, t^n)) = \chi(\phi^n) = n.
	$$
	with the formula \eqref{eq-der-formula} we get that $\|d(t^n)\|_s\geq n$. So $\|d(t^n)\||_s\to\infty$ at $n\to\infty$. At the same time $\|gt^n\||_s=1$. In this case the operator $d$ -- converts a bounded sequence into an unbounded one, and hence is not bounded itself.
	\end{proof}

\begin{proposition}
 If the character $\chi$ gives the derivation $d\in Der(\A_s)$, then for any two morphisms $\phi,\psi\in\Hom(a,b)$ we have that $\chi(\phi)=\chi(\psi)$.
\end{proposition}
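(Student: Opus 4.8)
The plan is to use the groupoid structure to convert a pair of parallel morphisms into a single loop, and then to invoke the fact that the character of a derivation into $\A_s$ must vanish on loops. Since $\phi,\psi\in\Hom(a,b)$ share the same source $a$ and the same target $b$, the inverse $\psi^{-1}$ runs from $b$ back to $a$, so the composite $\psi^{-1}\circ\phi$ is well-defined and is a loop based at $a$. This is the one structural observation to get right: being parallel is exactly the condition that makes $\psi^{-1}\circ\phi$ an endomorphism rather than a morphism between distinct objects.

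Before assembling the loop I would record two elementary consequences of the additivity \eqref{eq-character}. Applying it to the identity morphism at $a$ gives $\chi(\mathrm{id}_a)=\chi(\mathrm{id}_a\circ\mathrm{id}_a)=2\chi(\mathrm{id}_a)$, hence $\chi(\mathrm{id}_a)=0$; and since $\psi^{-1}\circ\psi=\mathrm{id}_a$, additivity then forces $\chi(\psi^{-1})=-\chi(\psi)$. Both facts are in any case immediate from the vanishing on loops, since identities are themselves loops.

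Next, since $d\in Der(\A_s)$, Lemma \ref{th-suprem} guarantees that $d$ is quasi-inner, so by Definition \ref{def-QInn} the character $\chi$ is zero on every loop; in particular $\chi(\psi^{-1}\circ\phi)=0$. Combining this with additivity,
$$
0=\chi(\psi^{-1}\circ\phi)=\chi(\psi^{-1})+\chi(\phi)=-\chi(\psi)+\chi(\phi),
$$
which rearranges to $\chi(\phi)=\chi(\psi)$, as required.

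I do not expect a genuine obstacle: the entire content is the passage ``parallel morphisms differ by a loop'' together with the quasi-inner property supplied by Lemma \ref{th-suprem}. The only point demanding care is the bookkeeping of sources and targets in the conjugation groupoid, to confirm that $\psi^{-1}\circ\phi$ really is a loop at $a$, so that the vanishing-on-loops hypothesis applies.
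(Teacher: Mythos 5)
Your proof is correct and follows essentially the same route as the paper: both arguments reduce the claim to the fact (Lemma \ref{th-suprem}) that the character of a derivation into $\A_s$ vanishes on loops, and then observe that two parallel morphisms differ by a loop, so additivity \eqref{eq-character} forces $\chi(\phi)=\chi(\psi)$. The only cosmetic difference is that you form the loop as $\psi^{-1}\circ\phi$ and use $\chi(\psi^{-1})=-\chi(\psi)$, whereas the paper writes $\psi=\phi\circ\zeta$ for a loop $\zeta$ and applies additivity directly.
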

\begin{proof}
	By the lemma \ref{th-suprem} for each derivation $d\in Der(\A_s)$, the corresponding character $\chi$, in the sense of the lemma \ref{lemma-formula}, is zero on loops. For two different objects $a\neq b$, and two morphisms $\phi, \psi\in \Hom(a,b)$ (if they exist, of course) there exists a loop $\zeta\in\Hom(a,a)$ such that $\phi\circ\zeta = \psi$. So, since $\chi(\zeta) = 0$, by the formula \eqref{eq-character} we have that $\chi(\phi)=\chi(\psi)$.
\end{proof}

Let us proceed to the proof of the \ref{th-norma-qinn} theorem, i.e. we show that if norm $\|\cdot\|$ is subject to the supremum norm $\|\cdot\|_s$, then all derivations with values in the bimodule $\A$ are quasi-inner.

\begin{proof}[Proof of theorem \ref{th-norma-qinn}]
Consider a bounded operator $d$ over $\A$. From boundedness we obtain that for some $A$: $\|d(g)\|<A$ for all basis elements $g\in G$.

From the norm subordination we have that $\|d(g)\|_s <CA$. So the character $\chi$ corresponding to the derivation $d$ is trivial on loops by the lemma \ref{th-suprem}, and so $d$ is a quasi-inner derivation.
\end{proof}

\section{Examples and applications}

Consider the space $\ell_p(G)$, i.e., all elements of the form $\omega=\sum_{g\in G}x(g) g$, bounded by the $\ell_p$-norm
\begin{equation}
	\|\omega\|_p:= \sqrt[p]{\sum\limits_{g\in G}|x(g)|^p}.
\end{equation}

It is clear that such norm is subordinate to the supremum if $p\geq 1$:
$$
\|\omega\|_p \leq \sup_{g\in G} |x(g)|.
$$

\begin{example}
	All derivations in $Der(\ell_p(G)), p\geq 1$ are quasi-inner.
\end{example}

In \cite{JR69} was shown that for $p=1$ all derivations are inner. The case $p>1$ is new.

\subsection*{The case of $(\sigma,\tau)$-derivations}

It is clear that in the proof of Theorem \ref{th-norma-qinn} the ability to represent derivations via groupoid characters plays the key role. This construction can be applied to other structures as well, with a corresponding change in the groupoid structure. Thus, in \cite{AAS}, $(\sigma,\tau)$-derivations, i.e., operators satisfying a ''twisted'' Leibniz rule, were investigated. Conditions and applications of $(\sigma,tau)$-derivations can be found in \cite{HLS,ELMS}.

Let us give a slightly more general definition to fit our case.

\begin{definition}
	A linear bounded operator $d:\C[G]\to \A$ such that for some endomorphisms $\sigma,\tau:\C[G]\to\C[G]$
	\begin{equation}
		d(ab) = d(a)\sigma(b) + \tau(a) d(v),\quad a,b\in \C[G],
	\end{equation}
	let's call $(\sigma,\tau)-$derivation.
\end{definition}

As shown in \cite{AAS} for the ''twisted'' groupoid (see \cite{AAS}, Section 3) $(\sigma,\tau)-$derivation can be presented by its characters (\cite{AAS}, Theorem 1). Applying the same reasoning as in the proof of the theorem \ref{th-norma-qinn} to the case of $(\sigma,\tau)-$derivations we obtain the ''twisted'' analog of our theorem.

\begin{example}
	All $(\sigma,\tau)-$derivations with values in bimodule $\A$ generated by a norm subordinate supremum -- will be ($\sigma,\tau$)-quasi-inner.
\end{example}

Here ($\sigma,\tau$)-quasi-inner derivations are defined similarly as ($\sigma,\tau$)-derivations given by characters identically equal to zero on loops.

\subsection*{Central derivations}

The proved theorem and the proposed approach also suggest examples of norms in which outer derivations appear.

Recall (see \cite{Ar}, Section 2.3) that the central derivation $d^{t}_z$ is an operator which is given by a center element of the group $z\in Z(G)$ and a non-trivial homomorphism $t:G\to (\C,+)$ into the additive group of complex numbers on generators $g\in G\subset \C[G]$ by the formula
\begin{equation}
 d^t_z: g\to \tau(g) gz.
\end{equation}
Central derivations form a subalgebra (\cite{Ar}, Theorem 2). It is easy to see that the character $\chi^{t}_z$ corresponding in the sense of the lemma \ref{lemma-formula} is non-trivial on loops (see \cite{Ar}, proposition 5). More precisely, its carrier is a subgroupoid with one object (just a fixed central element). From which we obtain that the central derivation cannot be quasi-inner.

A simple calculation shows that the central derivations are not bounded in the norm supremum, which of course follows from the theorem \ref{th-norma-qinn}. However, it is possible to choose a norm which is not subordinate to the supremum\footnote{The following example is proposed by A. Nayanzin}.

We define the norm $\|\cdot\|^*_{\alpha}$ for $\omega = \sum_{g\in G}x(g) g$ as follows
\begin{equation}
 \|\omega\|^*_{\alpha} := \sum\limits_{g\in G} |x(g)|e^{-\alpha|g|}
\end{equation}

We denote the closure of $\C[G]$ by this norm by $\A^*_{\alpha}$.

\begin{example}
For all sufficiently large $\alpha>0$, central derivations with values in the bimodule $\A^*_{\alpha}$ are bounded.
\end{example}

The proof is achieved by a simple calculation.
%\section{Conclusion}
\bigskip

In the case of a group ring (i.e. the case of finite linear combinations without normalized space structure) the ideals of inner and quasi-inner derivations do not coincide (see \cite{Ar}, the case of Heisenberg group). Moreover, the difference between inner and quasi-inner derivations is determined by the combinatorial properties of the group. As the proof of the theorem \ref{th-norma-qinn} shows in the case of normalized bimodules, this dependence vanishes. So the following assumption seems plausible.
\begin{Qu}
 All derivations in $\ell_p(G)$ are inner.
\end{Qu}

%=================Список литературы====================
%\end{fulltext}


\begin{thebibliography}{10}


\RBibitem{Ar}
\by A.~A.~Arutyunov
\paper Derivation Algebra in Noncommutative Group Algebras
\inbook Differential equations and dynamical systems
\bookinfo Collected papers
\serial Trudy Mat. Inst. Steklova
\yr 2020
\vol 308
\pages 28--41
\publ Steklov Math. Inst. RAS
\publaddr Moscow
\mathnet{http://mi.mathnet.ru/tm4048}
\crossref{https://doi.org/10.4213/tm4048}
\elib{https://elibrary.ru/item.asp?id=43288122}
\transl
\jour Proc. Steklov Inst. Math.
\yr 2020
\vol 308
\pages 22--34
\crossref{https://doi.org/10.1134/S0081543820010022}

\RBibitem{AM}
\by A.~A.~Arutyunov, A.~S.~Mishchenko
\paper A~smooth version of Johnson's problem on derivations of group algebras
\jour Mat. Sb.
\yr 2019
\vol 210
\issue 6
\pages 3--29
\mathnet{http://mi.mathnet.ru/msb9119}
\crossref{https://doi.org/10.4213/sm9119}
\mathscinet{http://www.ams.org/mathscinet-getitem?mr=3954336}
\adsnasa{http://adsabs.harvard.edu/cgi-bin/bib_query?2019SbMat.210..756A}
\elib{https://elibrary.ru/item.asp?id=37652216}
\transl
\jour Sb. Math.
\yr 2019
\vol 210
\issue 6
\pages 756--782
\crossref{https://doi.org/10.1070/SM9119}


\Bibitem{Ar-Al}
\by A.A.~Arutyunov, A.V.~Alekseev
\paper Cohomology of n-categories and derivations in group algebras
\jour Topology and its Applications
\href{https://doi.org/10.1016/j.topol.2019.107002}{https://doi.org/10.1016/j.topol.2019.107002}
\yr 2019


\bibitem{Dales} 
\by Dales, H. G.
\paper Banach algebras and automatic continuity
\publaddr Clarendon Press 
\publ Oxford University Press 
\yr 2000


\Bibitem{Johnson2001}
\by B.E.~Johnson
\paper The derivation problem for group algebras of connected locally compact groups
\jour J. London Math. Soc.
\vol 63:2
\yr 2001
\pages 441-452

\Bibitem{JR69} 
\by Johnson, B. E., Ringrose J. R.
\paper Derivations of operator algebras and discrete group algebras.
\jour Bull. London Math. Soc.
\vol 1
\pages 70-74 
\yr 1969

\Bibitem{AAS}
\by Aleksandr Alekseev, Andronick Arutyunov, Sergei Silvestrov
\paper On $(\sigma,\tau)$-derivations of group algebra as category characters
\jour arxiv
\href{https://arxiv.org/abs/2008.00390}{https://arxiv.org/abs/2008.00390}
\yr 2020

\Bibitem{HLS}
\by  Hartwig, J. T., Larsson, D., Silvestrov, S. D.
\paper Deformations of Lie algebras using $\sigma$ derivations
\jour J. Algebra
\vol 295(2)
\pages 314-361
\yr 2006

\Bibitem{ELMS}
\by Elchinger, O., Lundeng?ard, K., Makhlouf, A., Silvestrov, S. D.
\paper Brackets with $(\tau,\sigma)$-derivations and $(p,q)$-deformations of Witt and Virasoro algebras
\jour Forum Math.
\vol 28(4)
\pages 657-673
\arxiv 1403.6291
\yr 2016


\Bibitem{Losert}
\by V.~Losert
\paper The derivation problem for group algebras
\jour Ann. of Math.
\vol 168:1
\yr 2008
\pages 221-246


\end{thebibliography}
\end{document}